\newtheorem{Theorem}{Theorem}
\newtheorem{Definition}{Definition}
\newtheorem{Problem}{Problem}
\newtheorem{Corollary}{Corollary}
\begin{document}
\title{On Non-Standard Models of Peano Arithmetic and Tennenbaum's Theorem}
\author{Samuel Reid\thanks{Undergraduate student at the University of Calgary; \textsf{E-mail: smrei@ucalgary.ca}}}
\maketitle

\begin{abstract}
Throughout the course of mathematical history, generalizations of previously understood concepts and structures have led to the fruitful development of the hierarchy of number systems, non-euclidean geometry, and many other epochal phases in mathematical progress. In the study of formalized theories of arithmetic, it is only natural to consider the extension from the standard model of Peano arithmetic, $\langle \mathbb{N},+,\times,\leq,0,1 \rangle$, to non-standard models of arithmetic. The existence of non-standard models of Peano arithmetic provided motivation in the early $20^{th}$ century for a variety of questions in model theory regarding the classification of models up to isomorphism and the properties that non-standard models of Peano arithmetic have. This paper presents these questions and the necessary results to prove Tennenbaum's Theorem, which draws an explicit line between the properties of standard and non-standard models; namely, that no countable non-standard model of Peano arithmetic is recursive. These model-theoretic results have contributed to the foundational framework within which research programs developed by Skolem, Rosser, Tarski, Mostowski and others have flourished. While such foundational topics were crucial to active fields of research during the middle of the $20^{th}$ century, numerous open questions about models of arithmetic, and model theory in general, still remain pertinent to the realm of $21^{st}$ century mathematical discourse.
\end{abstract}

\pagebreak

\section{Historical and Philosophical Motivation}
Prior to delineating the terminological parameters for the present analysis of non-standard models of peano arithmetic, the historical and philosophical motivation behind the project must be rendered explicit. The important background for the development of non-standard models of mathematics, such as non-euclidean geometry, provide a coherent context to discuss the non-standard models of arithmetic developed during the $20^{th}$ century. The story of non-euclidean geometry stems from the Islamic Golden Age taking place from the $9^{th}$ to $12^{th}$ century\cite{Berggren}, in which much of the remaining knowledge from Babylonian, Egyptian, and Greek mathematics were compiled by Arab scholars. Translations of these writings subsequently spread to Europe during the Middle Ages and provided European mathematicians the foundation to work for centuries on contributing to the growing body of knowledge of mathematics. During the $18^{th}$ century, seemingly independently\cite{Gray}, J\'{a}nos Bolyai and Nikolai Lobachevsky considered non-standard models of geometry, such as Hyperbolic geometry, which challenged the geometry of Euclid accepted since antiquity. Non-euclidean geometry was further pursued by Bernhard Riemann\cite{Laugwitz} and led to the development of metric tensors and manifolds which provided the mathematical foundation for Albert Einstein's theory of General Relativity. Since the consideration of non-standard models of geometry were successful in providing a coherent framework for generalizing mathematics to such crowning intellectual achievements as General Relativity, surely the consideration of non-standard models of arithmetic will lead to important results in the foundations of mathematics as well.

At the time of Riemann and Einstein, research programs championed by Richard Dedekind, Bertrand Russell, David Hilbert, and others, led to the investigation of formalized axiomatic systems as a foundation for mathematics\cite{Ewald}. One such axiomatic system, known as Peano arithmetic, was proposed by the Italian mathematician Giuseppe Peano in his 1889, ``\textit{Arithmetices principia, nova methodo exposita}''. Peano arithmetic became the paradigmatic axiomatization of arithmetic, and also provided mathematicians with a concrete example of the practical expressiveness of results that can be proved directly from axioms by logic. Furthermore, major paradoxes in naive set theory, such as Russell's paradox, required reformulations of entire fields of mathematics; simultaneously, the philosophical positions of logicism, formalism, and intuitionism on the epistemology and ontology of mathematics began to develop\cite{Mancosu}. Yet throughout this burst of activity in the foundations of mathematics, arguably the two most important lines of research to emerge from this period were Zermelo-Fraenkel set theory and Peano arithmetic; and, starting with Cantor, the clarification of the concept of infinity as described by ordinals and the cardinality of sets\cite{Ferreiros}. This paper engages in an analysis of non-standard models of Peano arithmetic and the developments in model theory and set theory resulting from the research done during the early and mid $20^{th}$ century on foundations of mathematics. The results discussed include the existence of $2^{\aleph_{0}}$ non-standard models of Peano arithmetic, Overspill, and Tennenbaum's Theorem.

\section{Lexicon}
The first-order language $\mathsf{L}$ contains a truth-functionally complete set of boolean connectives, equality, existential and universal quantification, an endless supply of variables $v_{i}$, brackets, and non-logical symbols for constants, relations and functions. While there are numerous ways to configure this first-order language, the notion of a structure naturally formulates $\mathsf{L}$ for useful applications.\cite{Hodges}
\begin{Definition}
$\mathcal{M}$ is an $\mathsf{L}$-structure when $\mathcal{M}$ subsumes the following,
\begin{enumerate}
\item The set of all elements of $\mathcal{M}$, called the domain, dom$(\mathcal{M})$.
\item The set of elements of $\mathcal{M}$ which are constant symbols. For a constant $c$, the constant element named by $c$ is $c^\mathcal{M}$.
\item For each positive integer $n$, the set of $n$-ary relations on dom$(\mathcal{M})$. For a relation symbol $R$, the relation named by $R$ is $R^\mathcal{M}$.
\item For each positive integer $n$, the set of $n$-ary operations on dom($\mathcal{M})$ called functions $f: (\text{dom}(\mathcal{M}))^{n} \rightarrow \text{dom}(\mathcal{M})$. For a function symbol $F$, the function named by $F$ is $F^\mathcal{M}$.
\end{enumerate}
\end{Definition}
A particular type of $\mathsf{L}$-structure, called a model, gives - rather informally - the correct interpretation to a set of $\mathsf{L}$-sentences called an $\mathsf{L}$-theory. That is,
\begin{Definition}
Let $T$ be an $\mathsf{L}$-theory and let $\varphi \in T$. If $\mathcal{M} \vDash \varphi$ for all $\varphi \in T$, then $\mathcal{M}$ is a model for $T$, written $\mathcal{M} \vDash T$.
\end{Definition}
For discussing models of arithmetic, the first-order language of arithmetic $\mathsf{L_A}$ consists of the constant symbols $0$ and $1$, the binary relation symbol\footnote{The relation $\leq: \mathbb{N} \times \mathbb{N}$, called the total order relation, is that for all $a,b \in \mathbb{N}, a \leq b$ if and only if there exists some $c \in \mathbb{N}$ such that $a + c =b$.} $\leq$, and two binary function symbols $+:\mathbb{N}^2 \rightarrow \mathbb{N}$ and $\times:\mathbb{N}^2 \rightarrow \mathbb{N}$. The main interest of this paper, Peano arithmetic, is a particular $L_{A}$-theory defined as follows.\cite{Smith}
\begin{Definition}
(Peano Arithmetic): $\mathsf{PA}$ is an $\mathsf{L_A}$-theory with the axioms,
\begin{enumerate}
\item $\forall x (x+1 \neq 0)$.
\item $\forall x ((x+1=y+1 \Rightarrow x=y))$.
\item $\forall x (x + 0 = 0)$.
\item $\forall x \forall y (x + (y+1) = (x+y) +1)$.
\item $\forall x (x \times 0 = 0)$.
\item $\forall x \forall y (x \times (y+1) = (x \times y) + x)$.
\end{enumerate}
and every sentence that is an instance of the induction schema, $$\text{Ind}(\varphi) := (\varphi(0) \wedge \forall x(\varphi(x) \Rightarrow \varphi(x+1))) \Rightarrow \forall x \varphi(x)$$ where $\varphi(x)$ is an $L_{A}$-sentence.
\end{Definition}
\pagebreak
The crucial definition of a model for Peano arithmetic immediately follows.
\begin{Definition}
A model for Peano arithmetic $\mathcal{M} \vDash \mathsf{PA}$ is an $\mathsf{L_A}$-structure such that $\mathcal{M} \vDash \varphi$ for all $\varphi \in \mathsf{PA}$.
\end{Definition}
The signature of an $\mathsf{L}$-structure lists the set of functions, relations and constants of that structure. So, the $\mathsf{L_A}$-structure with domain $\{1,2,3,...\}$ is known as the standard model $\mathcal{N}$ with a signature given by $\langle \mathbb{N},+,\times,\leq,0,1 \rangle$.

In order to investigate the relationships between standard and non-standard models of $\mathsf{PA}$, basic definitions and results concerning $\mathsf{L}$-structures and models for $\mathsf{L}$-theories must be presented. Toward this end, two main questions that were important in the development of model theory in the early $20^{th}$ century are the following, given distinct $\mathsf{L}$-structures $\mathcal{M}$ and $\mathcal{K}$ and a theory $T$,
\begin{enumerate}
\item When can $\mathcal{M}$ and $\mathcal{K}$ be considered to model $T$ in the same way?
\item When can $\mathcal{M}$ and $\mathcal{K}$ be considered to model $T$ differently?
\end{enumerate}
The answer to the first question is that $\mathcal{M}$ and $\mathcal{K}$ model $T$ in the same way when a type of function $f: \mathcal{M} \rightarrow \mathcal{K}$ is an isomorphism. The answer to the second question is much deeper and will require the rest of the paper to answer. In order to make the first answer more precise, we show that the ``type of function'' referred to is in fact a homomorphism satisfying certain conditions.\cite{Hodges}
\begin{Definition}
Let $S$ be a signature and let $\mathcal{A}$ and $\mathcal{B}$ be $\mathsf{L}$-structures. A homomorphism $f: \mathcal{A} \rightarrow \mathcal{B}$, is a function $f$ from dom$(\mathcal{A})$ to dom$(\mathcal{B})$ such that,
\begin{enumerate}
\item For each constant $c$ of $S$, $f(c^\mathcal{A})=c^\mathcal{B}$.
\item For each $n>0$, $n$-ary relation symbol $R$ of $S$ and n-tuple $\bar{a} \in \mathcal{A}$, if $\bar{a} \in R^\mathcal{A}$ then $f \bar{a} \in R^\mathcal{B}$.
\item For each $n>0$, $n$-ary function symbol $F$ of $S$ and n-tuple $\bar{a} \in \mathcal{A}$, $f(F^{\mathcal{A}}(\bar{a}))=F^{\mathcal{B}}(f\bar{a})$.
\end{enumerate}
Where $\bar{a}=(a_{0},...,a_{n-1})$ and $f\bar{a}=(fa_{0},...,fa_{n-1})$.
\end{Definition}
The conditions that $f$ satisfies in order to be an isomorphism are now given,
\begin{Definition}
An embedding of $\mathcal{A}$ into $\mathcal{B}$ is a homomorphism $f: \mathcal{A} \rightarrow \mathcal{B}$ which is injective and satisfies,
\begin{enumerate}
\item For each $n>0$, each n-ary relation symbol $R$ of $S$ and each n-tuple $\bar{a} \in \mathcal{A}$, $\bar{a} \in R^\mathcal{A} \Leftrightarrow f\bar{a} \in R^\mathcal{B}$.
\end{enumerate}
Furthermore, $A$ and $B$ are isomorphic, written $A \cong B$, when there exists a surjective embedding $f: \mathcal{A} \rightarrow \mathcal{B}$.
\end{Definition}
The second question is recapitulated formally as motivation for classifying the properties of standard and non-standard models of Peano arithmetic.
\begin{Problem}
Given the standard model $\mathcal{N}$ of $\mathsf{PA}$, if $\mathcal{M} \vDash \mathsf{PA}$ and $\mathcal{M} \ncong \mathcal{N}$, then how many such countable models $\mathcal{M}$ are there and how do they differ from $\mathcal{N}$?
\end{Problem}
The answer to this question requires the theory of non-standard models and will be answered by Theorem 5 and Theorem 8 in the forthcoming sections.

\section{Non-standard Models of $\mathsf{PA}$}
This section presents the ancillary terminology and results necessary to prove the existence of non-standard models of $\mathsf{PA}$ and that there are $2^{\aleph_{0}}$ such countable models. If we are to consider $\mathcal{N}$ as the standard model, then it seems natural to define a non-standard model $\mathcal{M}$ of $\mathsf{PA}$ such that $\mathcal{N} \ncong \mathcal{M}$.
\begin{Definition}
A non-standard model $\mathcal{M}$ of $\mathsf{PA}$ is an $\mathsf{L_{A}}$-structure such that $\mathcal{M} \vDash \varphi$, for all $\varphi \in \mathsf{PA}$, and $\mathcal{M} \ncong \mathcal{N}$, where $\mathcal{N}$ is the standard model of $\mathsf{PA}$.
\end{Definition}
That is to say, a model $\mathcal{M}$ of $\mathsf{PA}$ is non-standard when there does not exist a surjective embedding $f:\mathcal{N} \rightarrow \mathcal{M}$. Unpacking the definitions, this means that for any homomorphism $f:\mathcal{N}\rightarrow \mathcal{M}$, either there exists a constant symbol, relation, or function which is not mapped to ($f$ is not a bijection), or the condition for an embedding - that $\overline{a} \in R^{\mathcal{N}} \Leftrightarrow \overline{a} \in R^{\mathcal{M}}$ - does not hold. The explicit construction of such a non-standard model $\mathcal{M}$ of $\mathsf{PA}$ will require a connection between the satisfiability of a theory and some new constant symbol which ensures that $\mathcal{M} \ncong \mathcal{N}$. This connection is forged by the use of G\"{o}del's Completeness Theorem and the Compactness Theorem, two frequently used theorems in model theory and mathematical logic which were initially proven by Kurt G\"{o}del in 1929 and 1930, respectively.\cite{Marker}
\begin{Theorem}
(G\"{o}del's Completeness Theorem): Let $T$ be an $\mathsf{L}$-theory where $\varphi$ is an $\mathsf{L}$-sentence. Then $T \vDash \varphi$ if and only if $T \vdash \varphi$.
\end{Theorem}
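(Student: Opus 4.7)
The plan is to prove both directions separately, with the difficult direction being completeness, $T \vDash \varphi \Rightarrow T \vdash \varphi$.

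First, for soundness, $T \vdash \varphi \Rightarrow T \vDash \varphi$, I would proceed by induction on the length of a formal derivation of $\varphi$ from $T$. The base cases handle the logical axioms, which are valid in every $\mathsf{L}$-structure, together with the hypotheses $\psi \in T$, which hold in every model of $T$ by definition. The inductive step checks that each inference rule (modus ponens and generalization) preserves satisfaction in every model of $T$. This is routine once the proof system is fixed.

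For completeness, I would reformulate the claim via its contrapositive: \emph{every syntactically consistent $\mathsf{L}$-theory has a model}. To see this suffices, suppose $T \vDash \varphi$ but $T \nvdash \varphi$; then $T \cup \{\neg \varphi\}$ is consistent, so by the reformulated statement it has a model $\mathcal{M}$, whence $\mathcal{M} \vDash T$ but $\mathcal{M} \nvDash \varphi$, contradicting $T \vDash \varphi$. The core of the argument is thus a Henkin-style term-model construction, which I would carry out in three stages. First, expand $\mathsf{L}$ to a language $\mathsf{L}^{*}$ by adjoining a countable collection of fresh constant symbols $\{c_i : i < \omega\}$, and extend $T$ to a theory $T'$ in $\mathsf{L}^{*}$ with the Henkin witness property: for each $\mathsf{L}^{*}$-formula $\exists x\, \psi(x)$ there is a constant $c$ with $(\exists x\, \psi(x) \Rightarrow \psi(c)) \in T'$. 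Second, enumerate all $\mathsf{L}^{*}$-sentences and, using Zorn's Lemma (or a direct transfinite recursion), extend $T'$ to a maximally consistent theory $T^{*}$ still containing all the Henkin witness axioms. Third, define the term model $\mathcal{M}$ whose domain consists of closed $\mathsf{L}^{*}$-terms modulo the equivalence relation $s \sim t \iff (s = t) \in T^{*}$, with function and relation symbols interpreted by the natural syntactic definitions.

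The main obstacle, and the step requiring the most care, is the \emph{truth lemma}: for every $\mathsf{L}^{*}$-sentence $\theta$, one has $\mathcal{M} \vDash \theta$ if and only if $\theta \in T^{*}$. I would establish this by induction on the complexity of $\theta$. Atomic sentences reduce to the definition of $\sim$, the boolean cases use maximal consistency of $T^{*}$, and the quantifier case $\exists x\, \psi(x)$ is precisely where the Henkin witness property is indispensable: without a constant $c$ such that $\psi(c) \in T^{*}$, one cannot transfer syntactic existence into a semantic witness in $\mathcal{M}$. Once the truth lemma is proven, the $\mathsf{L}$-reduct of $\mathcal{M}$ is the desired model of $T$, completing the argument. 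The bookkeeping needed to preserve consistency while simultaneously adding witnesses for every existential assertion, so that the maximally consistent extension remains Henkin, is the delicate part; everything else is an orchestration of standard syntactic manipulations.
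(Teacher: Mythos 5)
The paper does not prove this theorem: it is stated as background (Theorem~1) with a citation to Marker and then used, via Corollary~1, to link consistency with satisfiability. So there is no in-paper argument to compare against, and your proposal must stand on its own. It does: what you describe is the standard Henkin proof, and the decomposition into soundness (induction on derivations), the contrapositive reformulation (every consistent theory has a model), the witness expansion, Lindenbaum's lemma, and the term model with its truth lemma is exactly right. Three small points deserve attention if you were to write this out in full. First, adjoining only countably many fresh constants suffices because the paper's languages are countable; for an arbitrary $\mathsf{L}$ you need $|\mathsf{L}|+\aleph_0$ many, and the resulting model has cardinality at most that, which is what makes the downward L\"owenheim--Skolem consequences work. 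Second, the witness property cannot be achieved in one pass, since each witness axiom $\exists x\,\psi(x)\Rightarrow\psi(c)$ itself contains a new constant and so generates new existential formulas needing witnesses; one iterates the expansion $\omega$ times and takes the union, and consistency is preserved at each stage precisely because the constants are fresh. Your closing remark about ``bookkeeping'' gestures at this, but it is the one place a reader would want the construction spelled out. Third, in the term model you must verify that $\sim$ is a congruence for the function and relation symbols, using the equality axioms of the proof system, before the atomic case of the truth lemma goes through. None of these is a gap in the sense of a wrong idea; they are the standard places where care is required, and your outline correctly identifies the truth lemma's existential case as the point where the Henkin property is indispensable.
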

G\"{o}del's Completeness theorem relates the semantic notion of a model's interpretation making a sentence true (i.e., that a sentence is a logical consequence), to the syntactic notion of proving a sentence. The following corollary connects the Completeness Theorem to the Compactness Theorem in a way that is essential to the existence proof of non-standard models of $\mathsf{PA}$.\cite{Marker}
\begin{Corollary}
$T$ is consistent if and only if $T$ is satisfiable.
\end{Corollary}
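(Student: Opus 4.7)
The plan is to reduce the corollary to Theorem 1 applied to a single distinguished sentence, namely a formal contradiction $\bot := (0 \neq 0)$, and then read off both implications by contraposition. The key observation is that $T$ is inconsistent exactly when $T \vdash \bot$, since from a contradiction every sentence is derivable, and $T$ is unsatisfiable exactly when $T \vDash \bot$, since an unsatisfiable theory vacuously entails every $\mathsf{L}$-sentence. Hence the corollary reduces to the biconditional $T \vdash \bot \Leftrightarrow T \vDash \bot$, which is exactly Theorem 1 with $\varphi := \bot$.

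For the forward direction, suppose $T$ is consistent, so $T \not\vdash \bot$. Theorem 1 contraposed yields $T \not\vDash \bot$, so there must exist an $\mathsf{L}$-structure $\mathcal{M}$ with $\mathcal{M} \vDash T$ and $\mathcal{M} \not\vDash \bot$; in particular $\mathcal{M} \vDash T$, so $T$ is satisfiable. For the backward direction, suppose $\mathcal{M} \vDash T$ for some structure $\mathcal{M}$; if $T$ were inconsistent then $T \vdash \bot$, and Theorem 1 would give $T \vDash \bot$, hence $\mathcal{M} \vDash \bot$, which is absurd. Thus $T \not\vdash \bot$ and $T$ is consistent.

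The main, and indeed the only, obstacle is the careful translation between the syntactic and semantic sides of the definitions; once ``no contradiction is provable'' and ``some structure satisfies every axiom'' are accepted as the working formulations of consistency and satisfiability, G\"{o}del's Completeness Theorem does all of the genuine work. The deep content is concentrated entirely in Theorem 1 itself, whose standard Henkin-style term-model proof is not needed here since the excerpt permits Theorem 1 to be invoked as a black box.
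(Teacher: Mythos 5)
Your proof is correct and takes essentially the same route as the paper: both reduce the corollary to G\"{o}del's Completeness Theorem applied to the single sentence $\bot$, using the fact that an unsatisfiable theory vacuously entails $\bot$. If anything, your treatment of the backward direction (satisfiable $\Rightarrow$ consistent) is more careful than the paper's, which dismisses it as ``an immediate contradiction by the definition of satisfiability'' when it in fact requires the soundness half of Theorem 1, exactly as you invoke it.
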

\begin{proof}
Assume to the contrary that there exists a theory $T$ such that $T$ is consistent and $T$ is not satisfiable. Since $T$ is not satisfiable, there does not exist a model $\mathcal{M}$ of $T$. So, any $\mathsf{L}$-structure which attempts to model $T$ is a model of $\perp$. Then, $T \vDash \perp$ so by the Completeness Theorem, $T \vdash \perp$; yet this contradicts the assumption that $T$ is consistent. Assume to the contrary that there exists a theory $T$ such that $T$ is satisfiable and $T$ is not consistent; this is an immediate contradiction by the definition of satisfiability. Therefore, $T$ is consistent if and only if $T$ is satisfiable.
\end{proof}
\begin{Theorem}(Compactness Theorem):
$T$ is satisfiable if and only if every finite subset of $T$ is satisfiable.
\end{Theorem}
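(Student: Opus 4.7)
The plan is to prove both directions, with the forward direction being essentially immediate and the reverse direction leveraging the Completeness Theorem via its corollary (consistency $\Leftrightarrow$ satisfiability) proved just above.

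For the forward direction, I would assume $T$ is satisfiable, so that there exists an $\mathsf{L}$-structure $\mathcal{M}$ with $\mathcal{M} \vDash T$. If $T_0 \subseteq T$ is any finite subset, then since $\mathcal{M} \vDash \varphi$ for every $\varphi \in T$, in particular $\mathcal{M} \vDash \varphi$ for every $\varphi \in T_0$, so $\mathcal{M} \vDash T_0$ and $T_0$ is satisfiable. This requires no further machinery.

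For the reverse direction, I would argue by contraposition: assume $T$ is not satisfiable, and produce a finite $T_0 \subseteq T$ that is also not satisfiable. By the corollary just established, non-satisfiability of $T$ is equivalent to $T$ being inconsistent, i.e., $T \vdash \perp$. The key observation is that formal proofs are finite objects: any derivation of $\perp$ from $T$ uses only finitely many premises, so there exists a finite $T_0 \subseteq T$ with $T_0 \vdash \perp$. Thus $T_0$ is inconsistent, and applying the corollary once more (in the other direction) gives that $T_0$ is not satisfiable, completing the contrapositive.

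The main obstacle is the appeal to the finiteness of proofs, which is the substantive content hidden behind the Completeness Theorem used as a black box. In a fully self-contained development one would either prove this directly from the inductive definition of $\vdash$ (every derivation tree has finitely many leaves) or else prove compactness via a syntactic Henkin-style construction rather than through completeness. Given the exposition already invokes Gödel's Completeness Theorem and its corollary as the central tools, the cleanest route is simply to cite the finiteness-of-derivations property as a basic feature of the proof calculus. Aside from this, the argument is a short two-step application of the corollary, once in each direction, with the contrapositive framing doing all the real work.
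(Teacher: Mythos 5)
Your proof is correct. Note, however, that the paper states the Compactness Theorem without any proof at all, so there is no argument of the author's to compare yours against; it is simply cited as a known result of G\"{o}del. Your derivation is the standard one: the forward direction is immediate (a model of $T$ models every subset of $T$), and the reverse direction correctly routes through the corollary (consistency $\Leftrightarrow$ satisfiability) in both directions, with the finiteness of formal derivations supplying the finite inconsistent subset $T_0$. You are right to flag that finiteness of derivations is the load-bearing fact hidden inside the proof calculus; given that the paper already treats the Completeness Theorem as a black box, citing that property is an acceptable level of rigor here, and your argument fills a gap the paper leaves open rather than diverging from it.
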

Given the Compactness Theorem, the existence of non-standard models was initially proved by the logician and mathematician Thoralf Skolem in 1934. In a modern form, following the general outline as presented in \cite{Kaye}, the proof is much simpler than was originally given by Skolem.
\begin{Theorem}
There exists non-standard models of $\mathsf{PA}$.
\end{Theorem}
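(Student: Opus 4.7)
The plan is to exhibit a non-standard model by a standard application of the Compactness Theorem, together with a language expansion that forces a ``point at infinity''. Concretely, I would enlarge the language $\mathsf{L_A}$ to $\mathsf{L_A}^{*} := \mathsf{L_A} \cup \{c\}$, where $c$ is a fresh constant symbol, and then consider the $\mathsf{L_A}^{*}$-theory
$$T := \mathsf{PA} \cup \{\, c \neq \underbrace{1 + 1 + \cdots + 1}_{n\text{ times}} : n \in \mathbb{N} \,\} \cup \{c \neq 0\}.$$
The idea is that any model of $T$ will satisfy every axiom of $\mathsf{PA}$, yet will contain an element (the interpretation of $c$) that is provably distinct from every standard numeral, so the model cannot be isomorphic to $\mathcal{N}$.

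The key step is to show that $T$ is satisfiable. For this I would invoke the Compactness Theorem (Theorem 3). Let $T_0 \subseteq T$ be finite. Then $T_0$ mentions only finitely many of the new axioms $c \neq \bar{n}$, say for $n \in \{n_1, \ldots, n_k\}$. I would then take $\mathcal{N}$ itself as the underlying $\mathsf{L_A}$-structure and interpret $c$ as any natural number strictly larger than $\max\{n_1,\ldots,n_k\}$; this expansion of $\mathcal{N}$ to an $\mathsf{L_A}^{*}$-structure satisfies $T_0$, since $\mathcal{N} \vDash \mathsf{PA}$ and the chosen value of $c^{\mathcal{N}}$ avoids each forbidden numeral in $T_0$. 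Thus every finite subset of $T$ is satisfiable, and by Compactness $T$ is satisfiable; say $\mathcal{M}^{*} \vDash T$.

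To finish, I would let $\mathcal{M}$ be the $\mathsf{L_A}$-reduct of $\mathcal{M}^{*}$ (i.e., forget the interpretation of $c$). Clearly $\mathcal{M} \vDash \mathsf{PA}$. It remains to verify $\mathcal{M} \ncong \mathcal{N}$. Suppose for contradiction there were an isomorphism $f : \mathcal{N} \to \mathcal{M}$. Because $f$ must send $0^{\mathcal{N}}$ to $0^{\mathcal{M}}$ and commute with $+$ and the interpretation of $1$, a routine induction shows that $f(n) = \bar{n}^{\mathcal{M}}$ for every $n \in \mathbb{N}$, and surjectivity of $f$ would force the element $c^{\mathcal{M}^{*}} \in \text{dom}(\mathcal{M})$ to equal some $\bar{n}^{\mathcal{M}}$, contradicting the axiom $c \neq \bar{n}$ in $T$. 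Hence $\mathcal{M}$ is a non-standard model of $\mathsf{PA}$.

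The main obstacle, though not difficult, is the bookkeeping at the very last step: one must justify that a purported isomorphism is pinned down on the standard numerals and that the extra element named by $c$ genuinely lies in $\text{dom}(\mathcal{M})$. The satisfiability of each finite $T_0$ in $\mathcal{N}$ is essentially immediate, so the Compactness invocation is the crisp core of the argument, and the non-isomorphism check is the only place where one must be careful not to conflate $\mathcal{M}$ with its expansion $\mathcal{M}^{*}$.
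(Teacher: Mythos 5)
Your proposal is correct and follows essentially the same route as the paper: expand the language by a fresh constant $c$, add the axioms $c \neq \bar{n}$, verify finite satisfiability by interpreting $c$ as a sufficiently large natural number in $\mathcal{N}$, apply Compactness, and then rule out an isomorphism with $\mathcal{N}$ via the element named by $c$. If anything, your write-up is more careful than the paper's on two points it glosses over --- the explicit passage to the $\mathsf{L_A}$-reduct and the observation that any isomorphism is pinned down on the numerals --- but the underlying argument is the same.
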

\begin{proof}
We want to prove that there exists a model $\mathcal{M}$ for $\mathsf{PA}$ which is not isomorphic to the standard model $\mathcal{N}$. Let $\overline{n}$ be the value of the numeral ${n}$ formed by $\overline{n} = \underbrace{1+ ... + 1}_\text{n 1's}$, and let $c$ be a new constant symbol. Then let,
\begin{equation*}
T_{k} = \{\mathsf{Ax_{PA}}\} \cup \{\neg(c=\overline{n}) | \text{ } \overline{n} < k\}
\end{equation*}
be a set of axioms in the language $\mathsf{L_{A}} \cup \{c\}$, where $n,k \in \mathbb{N}$. For a given $k$, give the interpretation $c^\mathcal{N} = \overline{k}^\mathcal{N}$. Then, since $\mathsf{PA}$ is consistent,\footnote{Gentzen's consistency proof (1936) for $\mathsf{PA}$ uses transfinite ordinal induction up to $\epsilon_{0}=\text{sup}\{\omega,\omega^\omega,\omega^{\omega^\omega},...\}$.\cite{Smith}} $T_{k}$ is consistent, and thus satisfiable by Corollary 1, for each $k \in \mathbb{N}$. Therefore, the standard model of Peano arithmetic is a model for $T_{k}$; that is to say, $\mathcal{N} \vDash T_{k}$. Since $T_{1} \subseteq T_{2} \subseteq \cdot\cdot\cdot$, and each $T_{i}$ is satisfiable for $i \in \mathbb{N}$, $$T_{\omega} = \bigcup_{i\in \mathbb{N}} T_{i}$$ is satisfiable by the Compactness Theorem. So, there exists an $(\mathsf{L_{A}} \cup \{c\})$-structure $\mathcal{M} = \langle \mathbb{N},+,\times,\leq,0,1,c \rangle$ such that $\mathcal{M} \vDash T_{\omega}$, and thus $\mathcal{M} \vDash \mathsf{PA}$.

Assume to the contrary that $\mathcal{M} \cong \mathcal{N}$, then there exists a surjective embedding $f: \mathcal{N} \rightarrow \mathcal{M}$ and so $f(n^{\mathcal{N}})=n^{\mathcal{M}}$, for all $n \in \mathbb{N}$. But since $c \neq n$, for all $n \in \mathbb{N}$, there does not exist an image in $\mathcal{M}$ of $c$ under $f$, which contradicts that $f$ is a surjective embedding. Therefore, $\mathcal{M}$ is a model for Peano arithmetic and $\mathcal{M} \ncong \mathcal{N}$, so $\mathcal{M}$ is non-standard.
\end{proof}

\begin{figure}[H]
\begin{center}
\includegraphics[scale=0.3]{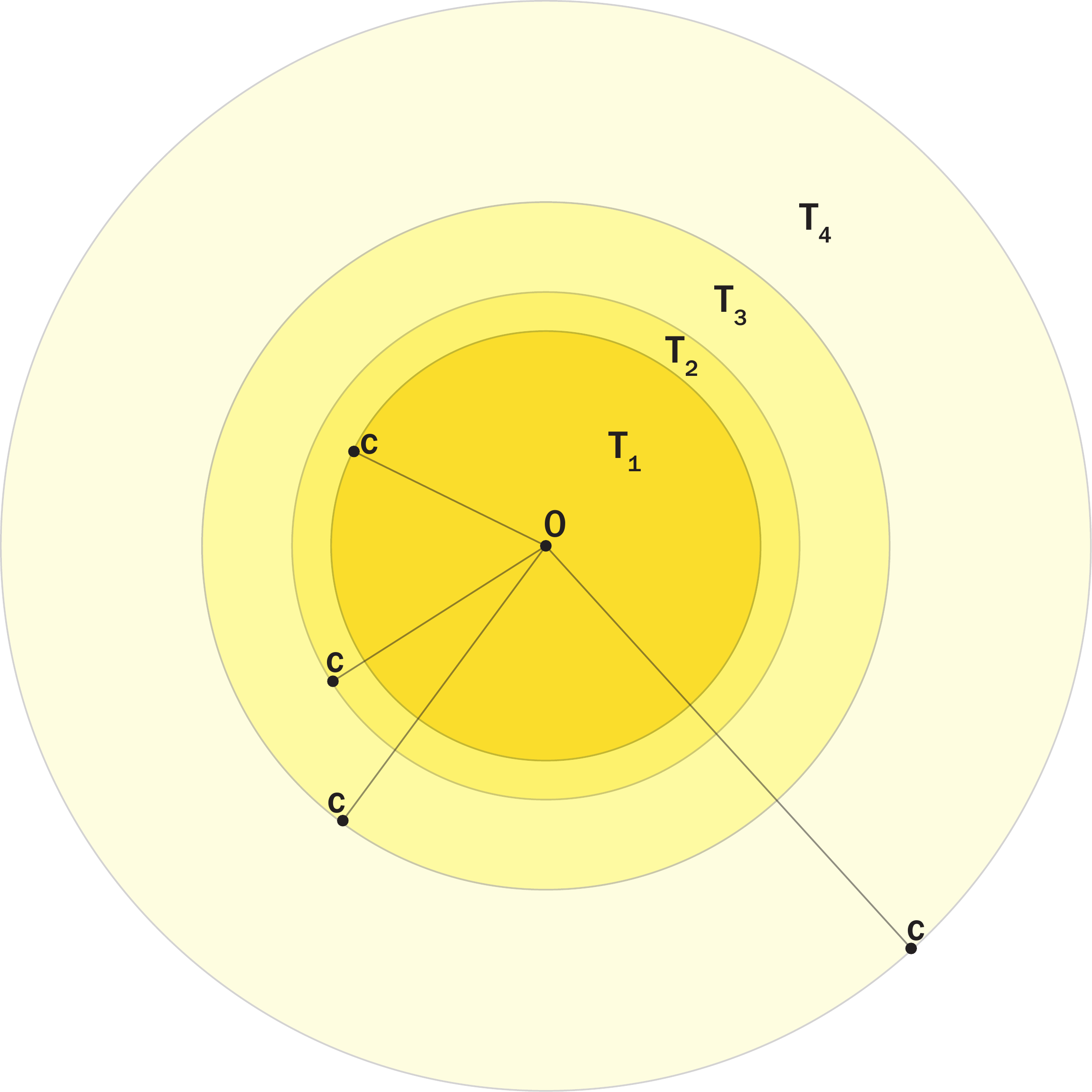}
\end{center}
\caption{Representation of $T_{1},...,T_{4}$ where in each case $c \neq n$ for all $n<k$.}
\end{figure}

\pagebreak

The existence of non-standard models of Peano arithmetic has then been established through invoking the Completeness Theorem and the Compactness Theorem, two cornerstones of mathematical logic and model theory. With some additional terminology and the use of the G\"{o}del-Rosser Theorem in the context of $\mathsf{PA}$, the first part of Problem 1 raised at the end of the Lexicon can be answered by proving that there are $2^{\aleph_{0}}$ non-standard countable models of $\mathsf{PA}$.

The first new notion to introduce is that of a substructure, or equivalently, an extension of an $\mathsf{L}$-structure, which relates two $\mathsf{L}$-structures by the cardinality of the domain and a map between the domains called an inclusion map.\cite{Marker}
\begin{Definition}
Let $\mathcal{M}$ and $\mathcal{K}$ be two $\mathsf{L}$-structures where $\text{dom}(\mathcal{M}) \subseteq \text{dom}(\mathcal{K})$. If the inclusion map $i: \text{dom}(\mathcal{M}) \hookrightarrow \text{dom}(\mathcal{K})$, which maps every element of $\text{dom}(\mathcal{M})$ to some element in $\text{dom}(\mathcal{K})$, is an embedding, then $\mathcal{M}$ is a substructure of $\mathcal{K}$, or equivalently, $\mathcal{K}$ is an extension of $\mathcal{M}$, written $\mathcal{M} \prec \mathcal{K}$.
\end{Definition}
Intuitively, the idea of an extension of an $\mathsf{L}$-structure is similar to the notion of a superset, but requires that there is a map $i: \text{dom}(\mathcal{M}) \hookrightarrow \text{dom}(\mathcal{K})$ that satisfies the condition of an embedding given in the Lexicon. Visually,
\begin{figure}[H]
\begin{center}
\includegraphics[scale=0.2]{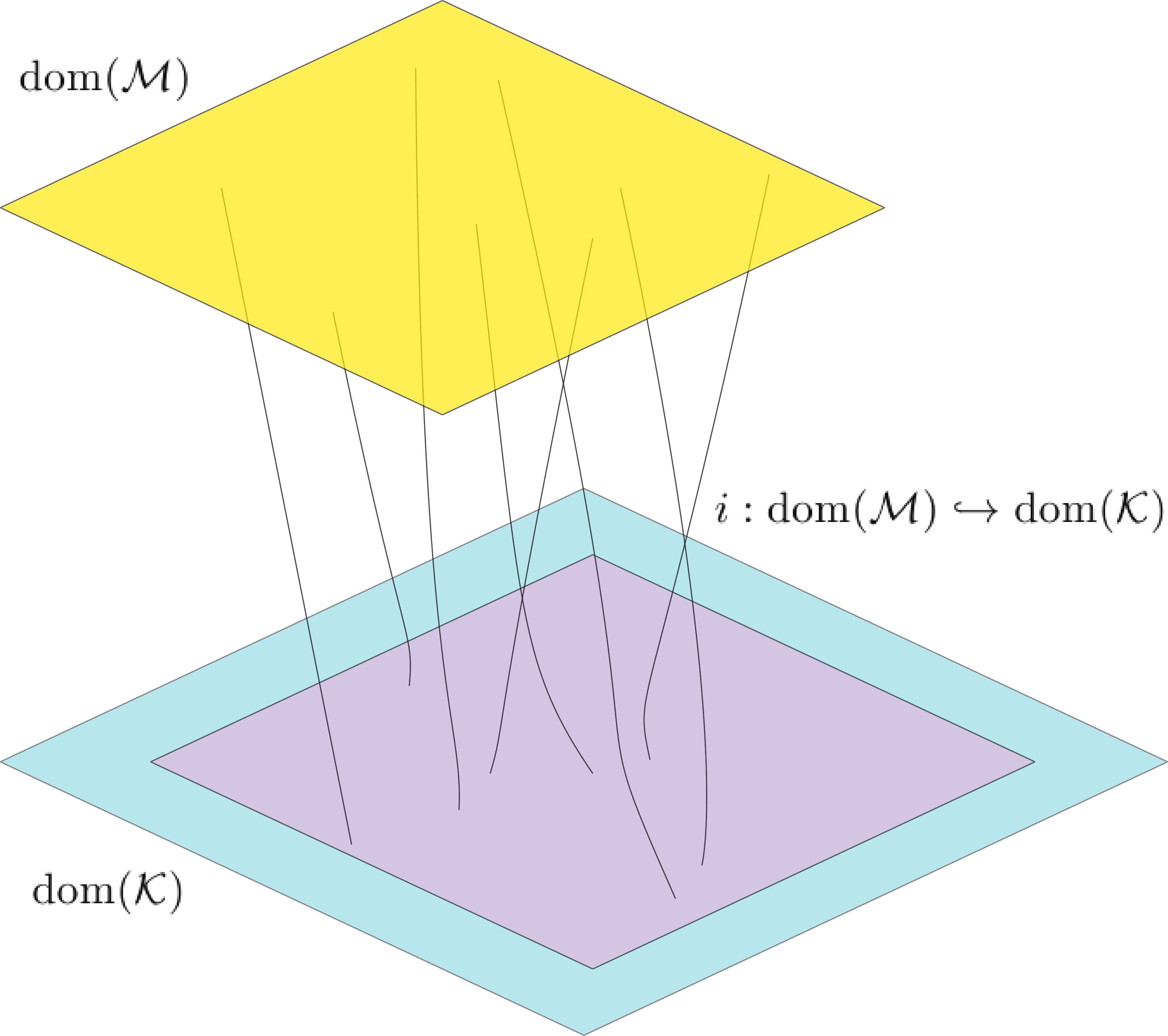}
\end{center}
\caption{An inclusion map from the substructure $\mathcal{M}$ to the extension $\mathcal{K}$}
\end{figure}
The use of extensions of $\mathsf{L}$-structures, and in particular, extensions of models, is an integral component of showing that there are continuum-many, that is $2^{\aleph_{0}}$, distinct complete extensions (and thus non-isomorphic models) of $\mathsf{PA}$. In order to accomplish this task without a lengthy digression, the G\"{o}del-Rosser Theorem is stated without proof, where $\mathsf{PA^-}$ is $\mathsf{PA}$ without the induction schema.\cite{Kaye}
\begin{Theorem}
(G\"{o}del-Rosser Theorem): Let $T$ be a recursively axiomatized $\mathsf{L_{A}}$-theory that extends $\mathsf{PA^-}$. Then there is a $\Pi_{1}$ sentence $\psi$ such that $T \nvdash \psi$ and $T\nvdash \neg \psi$.
\end{Theorem}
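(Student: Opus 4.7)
The plan is to prove a Rosser-style strengthening of Gödel's first incompleteness theorem by building a self-referential $\Pi_1$ sentence through the diagonal lemma applied to an arithmetized provability predicate. Since the stated conclusion is vacuous when $T$ is inconsistent, I will tacitly work under the assumption that $T$ is consistent.

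First, I would arithmetize the syntax of $T$: because $T$ is recursively axiomatized, the relation ``$y$ codes a $T$-derivation of the formula coded by $x$'' is primitive recursive, hence representable by a $\Delta_0$ formula $\mathrm{Prf}_T(y,x)$ of $\mathsf{L_A}$. Because $T \supseteq \mathsf{PA}^-$, this relation is \emph{strongly} representable, meaning that for the actual natural numbers $m,n$, $T$ decides each atomic instance $\mathrm{Prf}_T(\overline{m},\overline{n})$ in accord with the truth of the underlying proof-coding fact. I would cite the standard representability theorem for primitive recursive relations in $\mathsf{PA}^-$ rather than redevelop it. Next, I would invoke the diagonal lemma to obtain a sentence $\psi$ with
\[
T \vdash \psi \leftrightarrow \forall x \bigl( \mathrm{Prf}_T(x, \ulcorner \psi \urcorner) \to \exists y \le x \, \mathrm{Prf}_T(y, \ulcorner \neg \psi \urcorner) \bigr),
\]
the Rosser sentence informally saying ``every proof of me is matched by an at-least-as-short proof of my negation.'' Its matrix is $\Delta_0$ since bounded quantifiers do not raise complexity, so $\psi$ is $\Pi_1$ as required.

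The two undecidability claims then follow by symmetric bounded-search arguments. For $T \nvdash \psi$: if $T \vdash \psi$ via a proof with code $p$, then $T$ proves $\mathrm{Prf}_T(\overline{p}, \ulcorner \psi \urcorner)$ and hence (by the Rosser equivalence) $T$ proves $\exists y \le \overline{p}\, \mathrm{Prf}_T(y, \ulcorner \neg \psi \urcorner)$; but consistency of $T$ means no $m \le p$ actually codes a proof of $\neg \psi$, so strong representability yields $T \vdash \neg\mathrm{Prf}_T(\overline{m},\ulcorner \neg \psi \urcorner)$ for each such $m$, and $\mathsf{PA}^-$ can collapse finitely many such facts into $T \vdash \forall y \le \overline{p} \, \neg\mathrm{Prf}_T(y,\ulcorner \neg\psi\urcorner)$, yielding inconsistency. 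The dual argument, swapping the roles of $\psi$ and $\neg\psi$ and observing that a proof of $\neg\psi$ with code $q$ bounds every potential proof of $\psi$ above, produces $T \nvdash \neg \psi$.

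The real obstacle is not the Rosser diagonalization but the arithmetization and representability infrastructure: constructing the diagonal lemma, coding syntax, and checking that $\mathsf{PA}^-$ (without any induction) is strong enough to strongly represent every primitive recursive relation, which requires a careful use of $\beta$-function-style tricks. This technical bundle is precisely what makes it sensible for the excerpt to invoke the theorem as a black box; once it is granted, the two diagonal arguments above close the proof cleanly.
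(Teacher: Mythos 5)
The paper offers no proof of this statement --- it is explicitly ``stated without proof'' and delegated to Kaye --- so there is no in-paper argument to compare yours against. Judged on its own, your proposal is the standard Rosser construction and is correct in outline: arithmetize the proof relation, diagonalize to get the Rosser sentence, and run the two symmetric bounded-search arguments (for $T \nvdash \neg\psi$ the key step, which you compress into one clause, is that $\mathsf{PA}^-$ proves $\forall x\,(x \le \overline{q} \vee \overline{q} \le x)$, so inside $T$ any alleged proof code for $\psi$ either falls among the finitely many numerals already refuted or lies above $\overline{q}$, which then witnesses the existential). You are also right to flag the representability infrastructure over $\mathsf{PA}^-$ as the real technical weight, and right that the $\Pi_1$ classification hinges on the proof predicate being $\Delta_0$ --- if $\mathrm{Prf}_T$ were merely $\Sigma_1$ the Rosser sentence would not be $\Pi_1$, so that is a load-bearing claim, not a cosmetic one.

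Two corrections. First, your opening remark that ``the stated conclusion is vacuous when $T$ is inconsistent'' is backwards: for inconsistent $T$ every sentence is provable, so no $\psi$ with $T \nvdash \psi$ exists and the conclusion is \emph{false}, not vacuous. Consistency must therefore be an explicit hypothesis of the theorem (as it is in Kaye), not something you assume tacitly; as stated in the paper the theorem is missing this hypothesis, and your proof should say so rather than paper over it. Second, ``primitive recursive, hence representable by a $\Delta_0$ formula'' is not a valid inference --- primitive recursive relations are in general only $\Delta_1$. The $\Delta_0$-ness of $\mathrm{Prf}_T$ is a separate fact about the specific coding (all syntactic checks are bounded by the proof code), and since the whole $\Pi_1$ claim rests on it, it should be asserted as such rather than derived from primitive recursiveness.
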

The following theorem provides an answer to the first part of Problem 1.
\begin{Theorem}
There are $2^{\aleph_{0}}$ non-standard countable models of $\mathsf{PA}$.
\end{Theorem}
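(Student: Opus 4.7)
The plan is to construct a binary tree of consistent, recursively axiomatized extensions of $\mathsf{PA}$ by iteratively invoking the G\"odel-Rosser Theorem, and thereby to obtain $2^{\aleph_{0}}$ pairwise distinct consistent theories extending $\mathsf{PA}$, each of which then yields a countable model via a Henkin-style construction.

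For each finite binary string $\sigma \in 2^{<\omega}$, I would define, by induction on $|\sigma|$, a consistent and recursively axiomatized $\mathsf{L_A}$-theory $T_{\sigma}$ extending $\mathsf{PA}$. Set $T_{\emptyset} = \mathsf{PA}$. Given $T_{\sigma}$, the G\"odel-Rosser Theorem supplies an independent $\Pi_{1}$ sentence $\psi_{\sigma}$ with $T_{\sigma} \nvdash \psi_{\sigma}$ and $T_{\sigma} \nvdash \neg \psi_{\sigma}$; put $T_{\sigma 0} = T_{\sigma} \cup \{\psi_{\sigma}\}$ and $T_{\sigma 1} = T_{\sigma} \cup \{\neg \psi_{\sigma}\}$. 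Each child remains consistent precisely because $\psi_{\sigma}$ is independent of $T_{\sigma}$, and adjoining a single sentence preserves recursive axiomatizability, so the inductive hypotheses persist and the hypothesis of the G\"odel-Rosser Theorem is available at every node of the tree.

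For each branch $s \in 2^{\omega}$, set $T_{s} = \bigcup_{n < \omega} T_{s \restriction n}$. Every finite subset of $T_{s}$ lies in some $T_{s \restriction n}$, which is consistent and thus satisfiable by Corollary 1, so the Compactness Theorem yields a model $\mathcal{M}_{s} \vDash T_{s}$; a standard Henkin construction in the countable language $\mathsf{L_A}$ ensures $\mathcal{M}_{s}$ may be taken countable. For distinct $s, s' \in 2^{\omega}$, choose the least $n$ at which they differ: then one of $T_{s}, T_{s'}$ contains $\psi_{s \restriction n}$ while the other contains $\neg \psi_{s \restriction n}$, so $\mathcal{M}_{s}$ and $\mathcal{M}_{s'}$ disagree on a first-order sentence and therefore cannot be isomorphic. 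This yields $2^{\aleph_{0}}$ pairwise non-isomorphic countable models of $\mathsf{PA}$; since the standard model $\mathcal{N}$ accounts for at most one isomorphism class, at least $2^{\aleph_{0}}$ of them must be non-standard.

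The step I expect to be the main obstacle is verifying at every stage that the theory $T_{\sigma}$ remains recursively axiomatized, so that the G\"odel-Rosser Theorem can be reapplied at its child nodes; this requires the assignment $\sigma \mapsto T_{\sigma}$ to be uniformly computable rather than merely set-theoretically well-defined. A secondary subtlety is that the limit theories $T_{s}$ for $s \in 2^{\omega}$ will generically fail to be recursive---indeed uncountably many of them cannot be---but this is harmless because by that stage only the Compactness Theorem, not G\"odel-Rosser, is invoked.
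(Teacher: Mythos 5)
Your construction is essentially the same as the paper's: both build a binary tree of consistent extensions of $\mathsf{PA}$ by iterating the G\"odel--Rosser Theorem, adjoining $\psi_{\sigma}$ or $\neg\psi_{\sigma}$ at each node, and then read off $2^{\aleph_{0}}$ models from the branches. Where you genuinely diverge is in the two steps that actually need care, and in both cases your version is the more solid one. First, non-isomorphism: the paper argues that $\mathcal{M}_{i} \ncong \mathcal{M}_{j}$ because the two models ``differ in index'' from some common predecessor in the extension ordering, which is not by itself a proof (two structures arising from different branches of a construction can still be isomorphic). You instead locate the least $n$ where the branches split and observe that the two models disagree on the sentence $\psi_{s\restriction n}$, so they are not elementarily equivalent and hence not isomorphic --- this is the standard and correct way to close that gap. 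Second, countability: the paper never justifies that the models obtained are countable, whereas you note that a Henkin (or downward L\"owenheim--Skolem) argument in the countable language $\mathsf{L_A}$ delivers countable models; you also correctly discount the single isomorphism class of $\mathcal{N}$ at the end. Your stated worry about uniform computability of $\sigma \mapsto T_{\sigma}$ is unnecessary: each $T_{\sigma}$ is $\mathsf{PA}$ plus finitely many sentences, hence recursively axiomatized, and that is all the G\"odel--Rosser Theorem requires at each node; no uniformity across the tree is needed, since $2^{<\omega}$ is countable and the choices can be made node by node. In short, same skeleton as the paper, but your handling of the non-isomorphism and countability steps is what turns the sketch into a proof.
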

\begin{proof}
Let $\psi$ be the G\"{o}del-Rosser sentence of $\mathsf{PA}$ and consider the two complete extensions to $\mathsf{PA} \cup \psi$ and $\mathsf{PA} \cup \neg \psi$ which are both consistent by the G\"{o}del-Rosser Theorem and furthermore satisfiable by Corollary 1. Hence there exists models $\mathcal{M}_{0} \vDash \mathsf{PA} \cup \psi$ and $\mathcal{M}_{1} \vDash \mathsf{PA} \cup \neg \psi$. Proceeding by induction, let $\psi'$ be the G\"{o}del-Rosser sentence of $\mathsf{PA} \cup \psi$ and let $\psi''$ be the G\"{o}del-Rosser sentence of $\mathsf{PA} \cup \neg \psi$; then there exists models $\mathcal{M}_{0,0} \vDash (\mathsf{PA} \cup \psi) \cup \psi'$ and $\mathcal{M}_{0,1} \vDash (\mathsf{PA} \cup \psi) \cup \neg \psi'$. Similarly, there exists models $\mathcal{M}_{1,0} \vDash (\mathsf{PA} \cup \neg \psi) \cup \psi''$ and $\mathcal{M}_{1,1} \vDash (\mathsf{PA} \cup \neg \psi) \cup \neg \psi''$. Therefore, for the standard model $\mathcal{N} \vDash \mathsf{PA}$ we have that $\mathcal{N} \prec \mathcal{M}_{0}$ and $\mathcal{N} \prec \mathcal{M}_{1}$. Continuing on we have that $\mathcal{M}_{0} \prec \mathcal{M}_{0,0}$ and $\mathcal{M}_{0} \prec \mathcal{M}_{0,1}$ while $\mathcal{M}_{1} \prec \mathcal{M}_{1,0}$ and $\mathcal{M}_{1} \prec \mathcal{M}_{1,1}$. Continue this process for $\mathcal{M}_{0,0}, \mathcal{M}_{0,1}, \mathcal{M}_{1,0}$, and $\mathcal{M}_{1,1}$ letting 0 be the subscript in place of $x$ for $\mathcal{M}_{i,...,j,x}$ when $\mathcal{M}_{i,...,j} \vDash \Psi$ is a substructure of the model $\mathcal{M}_{i...,j,x} \vDash \Psi \cup \psi$ and letting 1 be the subscript in place of $x$ for $\mathcal{M}_{i,...,j,x}$ when $\mathcal{M}_{i,...,j} \vDash \Psi$ is a substructure of the model $\mathcal{M}_{i,...,j,x} \vDash \Psi \cup \neg \psi$. Since the cardinality of the set of all functions $f: \mathbb{N} \rightarrow \{0,1\}$ is $2^{\aleph_{0}}$, that is, $|\{0,1\}^{\mathbb{N}}|=2^{\aleph_{0}}$, it is clear that there exists $2^{\aleph_{0}}$ models of extensions of $\mathsf{PA}$ by corresponding a function $f \in \{0,1\}^{\mathbb{N}}$ to the index of a model $\mathcal{M}_{i}$ (which is a set of $0$'s and $1$'s). Let $\mathcal{M}_{i}$ and $\mathcal{M}_{j}$ be two distinct models (where $i$ and $j$ are the indices of $0$'s and $1$'s separated by commas) of $\Psi$ and $\Phi$; two eventual theories generated by taking finitely many iterations of the union of $\psi$ or $\neg \psi$ for the previous theory. Let $\mathcal{K}$ be the model for which $\mathcal{K} \prec \cdot \cdot \cdot \prec \mathcal{M}_{i}$ and $\mathcal{K} \prec \cdot\cdot\cdot \prec \mathcal{M}_{j}$. Then, since $i \neq j$, $\mathcal{M}_{i}$ and $\mathcal{M}_{j}$ differ in index from at least the model $\mathcal{K}$ and thus $\mathcal{M}_{i} \ncong \mathcal{M}_{j}$. Hence $\mathcal{M}_{k} \ncong \mathcal{N}$ for each $\mathcal{M}_{k}$ which is an extension of $\mathcal{N}$. Therefore, since for any $k$, $\mathcal{M}_{k} \vDash \mathsf{PA}$, there exists $2^{\aleph_{0}}$ non-standard countable models of $\mathsf{PA}$.
\end{proof}
\begin{figure}[H]
\begin{center}
\includegraphics[scale=0.49]{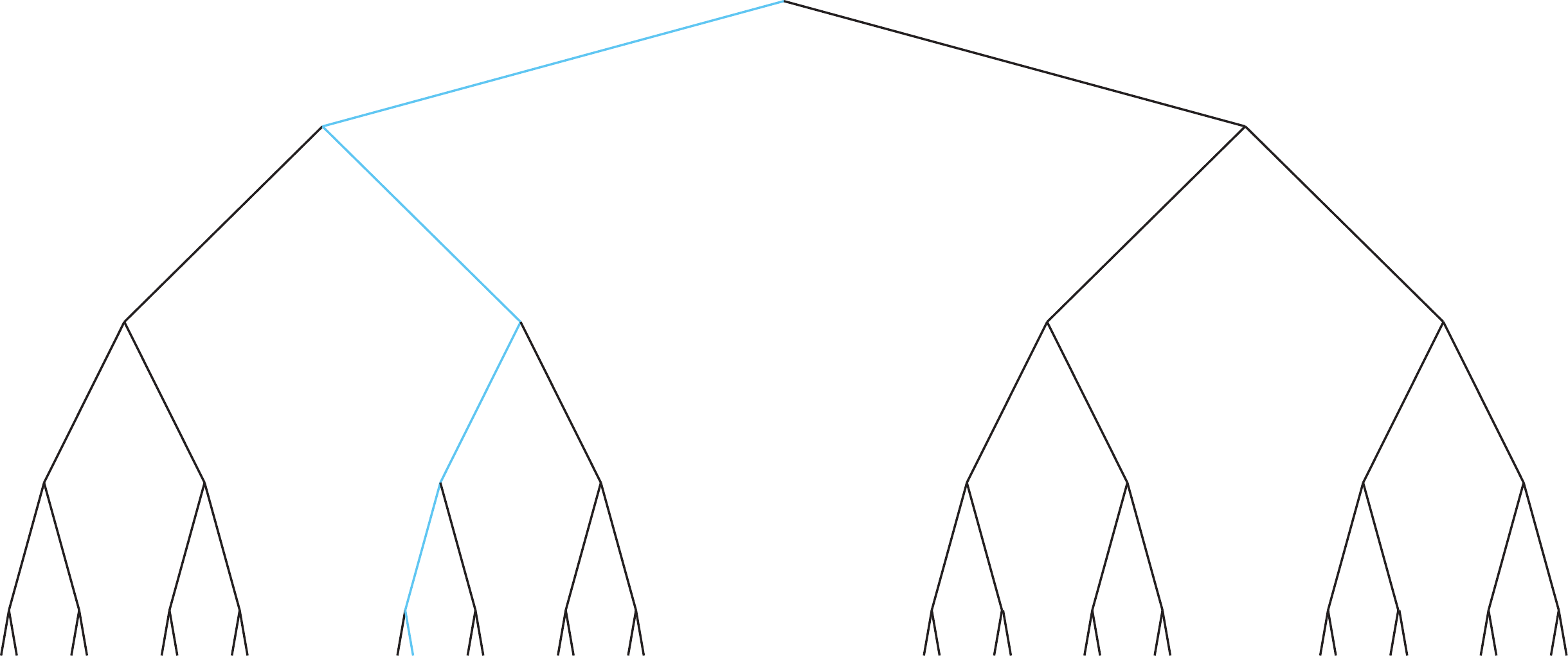}
\end{center}
\caption{Representation of model extensions as a binary $\omega$-tree.}
\end{figure}
Seen visually, the example blue branch of the binary $\omega$-tree represents the chain of extensions $\mathcal{N} \prec \mathcal{M}_{0}\prec \mathcal{M}_{0,1} \prec \mathcal{M}_{0,1,0} \prec \mathcal{M}_{0,1,0,0} \prec \mathcal{M}_{0,1,0,0,1}$.

\section{Overspill and Coding Sets}
The present analysis now turns to answer the second part of Problem 1, which asks how models isomorphic to the standard model and non-standard models of $\mathsf{PA}$ differ. Thus far, the primary concern of how to classify models has been by isomorphism and extension, noting the powerful tool that when a model $\mathcal{M}$ is an extension of $\mathcal{K}$, or that $\mathcal{K}$ is a substructure of $\mathcal{M}$, it is immediately the case that $\mathcal{M} \ncong \mathcal{K}$ (as was used heavily in the proof that there exists $2^{\aleph_{0}}$ non-standard countable models of $\mathsf{PA}$). The second main classifying method that will be used to tell how non-standard models of $\mathsf{PA}$ differ from models isomorphic to the standard model is based off of the properties that the functions found in the signature of the model have. That is to say, given a non-standard model $\mathcal{K}$ for $\mathsf{PA}$ with a signature $\langle \text{dom}(\mathcal{K}),\oplus,\otimes,\leq,0,1,... \rangle$, the functions $\oplus$ and $\otimes$ may or may not be recursive. From this, the model itself is considered recursive or not depending on if it has recursive functions. Formally, as mentioned in \cite{Kaye},
\begin{Definition}
An $\mathsf{L_A}$-structure $\mathcal{M}$ is recursive if and only if there are recursive functions $\oplus: \mathbb{N}^2 \rightarrow \mathbb{N}$, $\otimes: \mathbb{N}^2 \rightarrow \mathbb{N}$, a binary recursive relation $< \subseteq \mathbb{N}^2$ and natural numbers $n_{0},n_{1} \in \mathbb{N}$ such that $\langle \mathbb{N},\oplus,\otimes,<,n_{0},n_{1} \rangle \cong \mathcal{M}$.
\end{Definition}
The strategy for proving Tennenbaum's Theorem is to assume that there exists a recursive non-standard model $\mathcal{K}$ of $\mathsf{PA}$ and then show that this leads to a contradiction. This may be achieved by constructing a non-recursive set $\chi$ and then showing - under the assumption that $\mathcal{K}$ exists - that $\chi$ is recursive, leading to a contradiction.  Such a non-recursive set $\chi$ must involve non-standard elements and be constructible for any non-standard model of $\mathsf{PA}$, so the two notions of overspill and set coding are required to fulfill these requirements.

\textit{Firstly}, overspill essentially states that in a non-standard model $\mathcal{M}$, if all of the standard elements $n$ of $\mathcal{M}$ satisfy $\varphi(n)$, then a non-standard element does as well. Overspill will be used to introduce non-standard elements into sentences which have an arbitrary standard element $n$ ranged over. That is to say, given some sentence $\psi$ which holds for any $n$, so $\mathsf{PA} \vdash \psi(x,n)$, then $\mathcal{M} \vDash \psi(x,e)$ by overspill. The proof follows simply by the induction axiom of $\mathsf{PA}$. \cite{Kossak}
\begin{Theorem}(Overspill):
Let $\mathcal{M}$ be a non-standard model of $\mathsf{PA}$. Then, for some $\varphi \in \mathsf{PA}$, if $\mathcal{M} \vDash \varphi(n)$ for all standard elements $n$, then there exists a non-standard element $e$ such that $\mathcal{M} \vDash \varphi(e)$.
\end{Theorem}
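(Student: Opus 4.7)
The plan is a proof by contradiction that uses only the induction schema of $\mathsf{PA}$, invoked internally in $\mathcal{M}$ in its equivalent form of the least number principle. Suppose $\mathcal{M} \vDash \varphi(n)$ for every standard element $n \in \mathbb{N}$, and assume for contradiction that no nonstandard $e \in \text{dom}(\mathcal{M})$ satisfies $\varphi(e)$. Since $\mathcal{M}$ is nonstandard its domain contains at least one nonstandard element, which by our assumption is a witness of $\neg \varphi$, so $\mathcal{M} \vDash \exists x \, \neg \varphi(x)$.

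Next I would apply the least number principle to $\neg \varphi$ to obtain an element $e \in \text{dom}(\mathcal{M})$ which is the $\leq^{\mathcal{M}}$-minimal witness of $\neg \varphi$. Because every standard element of $\mathcal{M}$ satisfies $\varphi$ (and in particular $\varphi(0)$ holds), the element $e$ must itself be nonstandard, and in particular $e \neq 0^{\mathcal{M}}$. Let $e'$ be its predecessor, i.e.\ the unique element with $e' + 1 = e$, whose existence follows by one further application of induction inside $\mathcal{M}$. By the minimality of $e$ we have $\mathcal{M} \vDash \varphi(e')$. However $e'$ cannot be standard, for otherwise $e = e' + 1$ would be standard too, contradicting the nonstandardness of $e$. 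Thus $e'$ is a nonstandard element satisfying $\varphi$, contradicting our initial assumption and completing the proof.

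The main obstacle is conceptual rather than computational: the argument weaves together an external viewpoint, in which one distinguishes the standard initial segment $\mathbb{N} \subseteq \text{dom}(\mathcal{M})$ from the nonstandard elements, with an internal viewpoint, in which induction and its equivalents are applied inside $\mathcal{M}$. Care is needed to confirm that the least number principle being invoked really is a consequence of the induction schema and therefore holds in $\mathcal{M}$, while reasoning externally that the minimal witness produced by that principle cannot be standard; the essential trick, externally speaking, is that the standard cut $\mathbb{N}$ is closed under the successor operation, so nonstandardness is inherited by predecessors.
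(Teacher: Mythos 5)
Your proof is correct, but it takes a genuinely different route from the paper. The paper argues directly with the induction schema: under the contradiction hypothesis (that $\varphi$ holds at every standard element and fails at every non-standard one), the definable set $\{x : \mathcal{M} \vDash \varphi(x)\}$ contains $0$ and is closed under successor --- for standard $x$ because the standard cut is closed under $+1$, and for non-standard $x$ vacuously, since $\varphi(x)$ already fails there --- so $\mathrm{Ind}(\varphi)$ yields $\mathcal{M} \vDash \forall x\,\varphi(x)$, contradicting the failure at non-standard elements. (The paper states this inductive step rather tersely; the vacuous non-standard case is the part it leaves implicit.) You instead descend: you apply the least number principle to $\neg\varphi$, observe externally that the minimal witness $e$ must be non-standard, and then use its predecessor $e'$, which satisfies $\varphi$ by minimality yet is still non-standard because the standard cut is closed under successor. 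Both arguments rest on the same external fact about the standard cut, but yours routes through two auxiliary consequences of induction inside $\mathcal{M}$ (the least number principle and the existence of predecessors of nonzero elements), which you correctly flag as needing justification, while the paper's version invokes the induction axiom for $\varphi$ directly and concentrates all the subtlety in verifying the inductive step. Your version arguably makes the role of the contradiction hypothesis more transparent, at the cost of slightly more machinery.
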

\begin{proof}
Assume to the contrary that there exists a model $\mathcal{M}$ of $\mathsf{PA}$ for which $\mathcal{M} \vDash \varphi(n)$, for all standard elements $n$, and $\mathcal{M} \nvDash \varphi(e)$, for all non-standard elements $e$. By assumption, $\mathcal{M} \vDash \varphi(0)$ and $\mathcal{M} \vDash \forall x(\varphi(x) \Rightarrow \varphi(x+1))$, since $\mathcal{M} \vDash \varphi(n)$ for all standard elements $n$. Then, by the induction axiom of $\mathsf{PA}$, $\mathcal{M} \vDash \forall x \varphi(x)$, and so $\mathcal{M} \vDash \varphi(e)$, which contradicts the assumption.
\end{proof}

\textit{Secondly}, the process of coding a set will bridge the gap between non-standard models of $\mathsf{PA}$ and non-recursive sets by an application of overspill. Given a non-standard model $\mathcal{M} \vDash \mathsf{PA}$, the process of describing a non-recursive set in the domain of $\mathcal{M}$ is accomplished by using a prime number coding technique for sets. Specifically, given the abbreviation that $\pi_{n}$ is the $n^{th}$ prime number, the set of numbers $n$ for which $\mathcal{M} \vDash \exists k(c = k \times \pi_{n})$ will uniquely determine a set of numbers. With G\"{o}del numbering and other coding techniques, prime numbers are essential for ensuring a desired form of uniqueness; in the case of the arithmetization of syntax \cite{Smith}, G\"{o}del numbering relies on the fundamental theorem of arithmetic for a unique decoding. The uniqueness desired for coding sets is that a single element of the $\text{dom}(\mathcal{M})$ uniquely determines a set of numbers. This is exactly achieved by $c$ by the formula $\varphi(n,c)=\exists k(c=k \times \pi_{n})$. The standard system of sets coded in a model refers to the set of all such sets which are coded by some element $c$ in the model.\cite{Kennedy}

\begin{Definition}
For a non-standard model $\mathcal{M} \vDash \mathsf{PA}$, the standard system of sets coded in $\mathcal{M}$, denoted $\mathsf{SSy}(\mathcal{M})$, is the set of all $A \subseteq \mathbb{N}$ such that $A=\{n \in \mathbb{N}:\mathcal{M} \vDash \varphi(n,c)\}$, for $\varphi \in \mathsf{PA}$ and some $c \in \mathcal{M}$.
\end{Definition}

Given the method for coding a set by one element of $\text{dom}(\mathcal{M})$, the notion of recursive inseparability allows for an application of set coding by placing a restriction on pairwise disjoint sets as follows.\cite{Kaye}

\begin{Definition}
Disjoint sets $A,B \subseteq \mathbb{N}$ are recursively inseparable if and only if there does not exist recursive $C \subseteq \mathbb{N}$ such that $A \subseteq C$ and $C \cap B = \varnothing$.
\end{Definition}
Given that recursive inseparability gives a criteria for the existence of a non-recursive set, the description of a non-recursive set coded in any non-standard model $\mathcal{M} \vDash \mathsf{PA}$ is on the brink of derivation. The following proof fills in the details sketched in \cite{Kennedy}.

\begin{Theorem}
Let $\mathcal{M}$ be a non-standard model of $\mathsf{PA}$. Then $\mathsf{SSy}(\mathcal{M})$ contains a non-recursive set.
\end{Theorem}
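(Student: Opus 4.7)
The plan is to combine Theorem 6 (Overspill) with the classical recursion-theoretic fact that there exist two disjoint recursively enumerable sets $A, B \subseteq \mathbb{N}$ which are recursively inseparable. Fix $\Delta_0$-formulas $\alpha(x,y)$ and $\beta(x,y)$ witnessing the r.e. presentations of $A$ and $B$, so that $n \in A$ iff $\mathcal{N} \vDash \exists y\,\alpha(n,y)$, and similarly for $B$. My target is to produce a single $c \in \text{dom}(\mathcal{M})$ whose coded set
$$C := \{\, n \in \mathbb{N} : \mathcal{M} \vDash \exists k\,(c = k \times \pi_n) \,\}$$
separates $A$ from $B$, i.e.\ $A \subseteq C$ and $C \cap B = \varnothing$. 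Since $C \in \mathsf{SSy}(\mathcal{M})$ by the definition of the standard system, and recursive inseparability forbids any recursive separator, $C$ is automatically non-recursive and the theorem follows.

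To obtain such a $c$, I would introduce the $\mathsf{L_A}$-formula
$$\theta(k) := \exists c\, \forall n < k\, \Big(\big[\exists y < k\; \alpha(n,y)\big] \to \pi_n \mid c \Big) \wedge \Big(\big[\exists y < k\; \beta(n,y)\big] \to \pi_n \nmid c \Big),$$
in which $\pi_n \mid c$ abbreviates $\exists j\,(c = j \times \pi_n)$ and the prime-enumeration function $n \mapsto \pi_n$ is defined in $\mathsf{L_A}$ in the standard way. For every standard $k$, the numeral for the product of those primes $\pi_n$ with $n < k$ and $\exists y < k\,\alpha(n,y)$ serves as an explicit witness for $\theta(\overline{k})$ in $\mathcal{N}$; the second clause is then automatic because $A \cap B = \varnothing$. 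Hence $\mathsf{PA} \vdash \theta(\overline{k})$ for every standard $k$, so $\mathcal{M} \vDash \theta(k)$ at every standard element, and Theorem 6 produces a non-standard $e \in \text{dom}(\mathcal{M})$ with $\mathcal{M} \vDash \theta(e)$. Fixing any witness $c$ for $\theta(e)$, separation is immediate: if $n \in A$, then some standard $y$ satisfies $\alpha(n,y)$, and since both $n, y < e$, the first clause of $\theta(e)$ forces $\pi_n \mid c$ and hence $n \in C$; the case $n \in B$ is symmetric and gives $n \notin C$.

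I expect the main obstacle to be bookkeeping rather than conceptual. Assembling $\theta(k)$ as a single $\mathsf{L_A}$-formula relies on the standard arithmetization of the prime-enumeration function and divisibility inside $\mathsf{PA}$, and the transfer from ``provable for each standard numeral $\overline{k}$'' to ``true in $\mathcal{M}$ at every standard $k$'' must be stated carefully enough that Overspill applies verbatim. I also need to cite, rather than derive, the existence of recursively inseparable recursively enumerable sets, since this recursion-theoretic ingredient lies outside the material developed in the paper so far.
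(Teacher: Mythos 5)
Your proof is correct and rests on the same two pillars as the paper's --- a pair of recursively inseparable recursively enumerable sets $A,B$ and an application of Overspill (Theorem 6) --- but it applies Overspill to a different formula and so reaches the separating set by a genuinely different route. The paper overspills the bounded disjointness statement $(\forall x<k)(\forall y<k)(\forall z<k)\,\neg(\alpha(x,y)\wedge\beta(x,z))$ to get a non-standard bound $e$, and then defines the separator as $C=\{n\in\mathbb{N}:\mathcal{M}\vDash(\exists y<e)\,\alpha(n,y)\}$, a set coded by the single parameter $e$ via a bounded formula. You instead overspill the assertion that some $c$ exists whose prime divisors separate the $k$-bounded approximations of $A$ and $B$, extract a witness $c$ at a non-standard stage $e$, and take $C=\{n\in\mathbb{N}:\mathcal{M}\vDash\pi_n\mid c\}$. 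The paper's version is shorter and avoids arithmetizing the prime-product witness; yours requires that extra bookkeeping but delivers $C$ in exactly the prime-divisibility form $\exists y\,(c=y\otimes\pi_n)$ that the paper's proof of Tennenbaum's Theorem (Theorem 8) later presupposes, so your route dovetails more cleanly with Section 5, where the paper's own $C$ would still need to be re-coded. Three small repairs: the quantifier $\forall n<k$ in $\theta$ should visibly scope over the conjunction of both implications; the claim $\mathsf{PA}\vdash\theta(\overline{k})$ deserves a word of justification (your explicit product of primes makes $\theta(\overline{k})$ a true $\Sigma_1$ sentence, so $\Sigma_1$-completeness of $\mathsf{PA}$ applies --- and in fact $\mathcal{M}\vDash\theta(\overline{k})$ is all you need); and the final separation step uses that the $\Delta_0$ formulas $\alpha$ and $\beta$ hold of standard tuples in $\mathcal{M}$ exactly when they hold in $\mathcal{N}$, a transfer the paper itself invokes only implicitly.
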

\begin{proof}
Let $A,B \subseteq \mathbb{N}$ be recursively enumerable sets that are recursively inseparable. Let $f$ and $g$ be the $\mu$-recursive functions which enumerates $A$ and $B$, respectively. Since every $\mu$-recursive function is $\Sigma_{1}$, there exists $\Sigma_{1}$ formulas which express $A$ and $B$ of the form $\exists y \alpha(x,y)$ and $\exists z \beta(x,z)$, respectively, with $\Delta_{0}$ kernels $\alpha$ and $\beta$. Since $A, B \subseteq \mathbb{N}$ and $\mathcal{N} \prec \mathcal{M}$, there exists an inclusion map embedding $i: \mathbb{N} \hookrightarrow \text{dom}(\mathcal{M})$. Since every embedding is a homomorphism, then $f(i^{\mathcal{N}}(x))=i^{\mathcal{M}}(f(x))$ and $g(i^{\mathcal{N}}(x))=i^{\mathcal{M}}(g(x))$. That is to say, since the interpretations of $f$ and $g$ are preserved during a model extension, the same $\Sigma_{1}$ formulas will express $A$ and $B$ in the extension $\mathcal{M}$. Therefore, $A \cap B = \varnothing$ since $A$ and $B$ are recursively inseparable, and so for each $k \in \mathbb{N}$,
$$\mathcal{M} \vDash (\forall x < k)(\forall y < k)(\forall z < k) \neg(\alpha(x,y) \wedge \beta(x,z))$$
Thus, by overspill there exists a non-standard element $e$ such that,
$$\mathcal{M} \vDash (\forall x < e)(\forall y < e)(\forall z < e) \neg(\alpha(x,y) \wedge \beta(x,z))$$
Let $C \subseteq \mathbb{N}$ such that $C = \{n \in \mathbb{N}: \mathcal{M} \vDash (\exists y < e) \alpha(n,y)\}$. Then since $e$ is non-standard and the inclusion map embedding $i$ guarantees that the $\Sigma_{1}$ formulas expressing $f$ is preserved, $A \subseteq C$ and $C \cap B = \varnothing$. Therefore, $C$ is non-recursive since $A$ and $B$ are recursively inseparable.
\end{proof}

\section{Tennenbaum's Theorem}
With the terminology and results presented in the previous section in place, Tennenbaum's Theorem can now be proved following the outline from \cite{Kennedy}.
\begin{Theorem}(Tennenbaum's Theorem): If $\mathcal{M} = \langle \mathcal{M},\oplus,\otimes,<,0,1 \rangle$ is a countable model of $\mathsf{PA}$ such that $\mathcal{M} \ncong \mathcal{N} = \langle \mathbb{N},+,\times,<0,1 \rangle$, then $\mathcal{M}$ is not recursive.
\end{Theorem}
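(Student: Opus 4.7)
The plan is to argue by contradiction. Assume $\mathcal{M}$ is recursive, so by Definition~10 we may identify $\text{dom}(\mathcal{M})$ with $\mathbb{N}$ and treat $\oplus$, $\otimes$, $<$ as recursive operations and relations on $\mathbb{N}$, with $0^{\mathcal{M}}$ and $1^{\mathcal{M}}$ given as specified natural numbers. By Theorem~7, $\mathsf{SSy}(\mathcal{M})$ contains a non-recursive set $C \subseteq \mathbb{N}$; using the prime-power coding emphasized before Definition~11, fix a witness $c \in \text{dom}(\mathcal{M})$ such that
\[
n \in C \iff \mathcal{M} \vDash \exists k\,(c = k \otimes \pi_n^{\mathcal{M}}),
\]
where $\pi_n^{\mathcal{M}}$ denotes the interpretation in $\mathcal{M}$ of the $n^{\text{th}}$ prime. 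The aim is to exhibit a Turing procedure deciding $C$, contradicting non-recursiveness.

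The core observation is that divisibility of $c$ by the standard prime $\pi_n$ can be tested by a finite effective search inside $\mathcal{M}$. Since $\mathsf{PA}$ proves the Euclidean division algorithm, for each standard $n$ there exist unique $q \in \text{dom}(\mathcal{M})$ and $r \in \text{dom}(\mathcal{M})$ with $r < \pi_n^{\mathcal{M}}$ satisfying $c = (q \otimes \pi_n^{\mathcal{M}}) \oplus r$. Because $r$ lies below a standard element, $r$ must itself be standard, so $r = i^{\mathcal{M}}$ for exactly one $i \in \{0,1,\ldots,\pi_n-1\}$. The decision procedure, on input $n$, first computes $\pi_n$ by a sieve, then computes $\pi_n^{\mathcal{M}}$ and the candidate remainders $r_i := i^{\mathcal{M}}$ by iterated $\oplus$-addition of $1^{\mathcal{M}}$, and finally dovetails over $q \in \mathbb{N}$ and $i < \pi_n$, checking the equation $c = (q \otimes \pi_n^{\mathcal{M}}) \oplus r_i$ using recursive $\oplus$, $\otimes$ and decidable equality on $\mathbb{N}$. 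Euclidean division guarantees a unique successful pair, so the search halts; declare $n \in C$ iff the discovered $i$ is $0$. Every primitive is recursive, so $C$ is recursive --- the desired contradiction.

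The principal technical obstacles are the two internal facts about $\mathsf{PA}$ invoked above. First, one must verify that $\mathsf{PA}$ proves Euclidean division in the form $\forall x\,\forall y\,(y > 0 \Rightarrow \exists!\,q\,\exists!\,r\,(x = y \times q + r \wedge r < y))$, so that existence and uniqueness of $(q,r)$ hold inside every model of $\mathsf{PA}$; this is a standard induction on $x$ carried out in $\mathsf{PA}$. Second, one must establish downward absoluteness of the standard cut: for each external $k$, every $x \in \mathcal{M}$ with $x < k^{\mathcal{M}}$ is itself standard, which rests on the provable schema $\mathsf{PA} \vdash \forall x\,(x < \overline{k+1} \Leftrightarrow x = \overline{0} \vee \cdots \vee x = \overline{k})$ for each external $k$ (proved by external induction on $k$). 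Once these ingredients are in hand, recursiveness of $\oplus$ and $\otimes$ turns the internal Euclidean division statement into a halting effective procedure. Notably, no further invocation of overspill is required at this stage --- overspill has already done its work in the proof of Theorem~7, producing the non-standard witness $c$ that codes a non-recursive set.
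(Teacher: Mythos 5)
Your proposal is correct and follows the same skeleton as the paper's proof: assume recursiveness, take the non-recursive set $C \in \mathsf{SSy}(\mathcal{M})$ from Theorem~7 together with its coding element $c$, perform Euclidean division of $c$ by $\pi_n^{\mathcal{M}}$ inside $\mathcal{M}$, and decide $n \in C$ according to whether the remainder is $0$. You are in fact more explicit than the paper about the two points that make this an effective, terminating procedure: that $r < \pi_n^{\mathcal{M}}$ forces $r$ to be one of the finitely many standard numerals below $\pi_n$ (downward absoluteness of the standard cut), and that the dovetailed search over quotient and remainder halts by provable existence and is unambiguous by provable uniqueness. The one substantive difference is that the paper assumes only that $\oplus$ is recursive and eliminates $\otimes$ entirely by rewriting $y \otimes \pi_n^{\mathcal{M}}$ as the $\pi_n$-fold sum $y \oplus \cdots \oplus y$ --- this is the whole point of the formula $\psi(n,c)$ --- so it reaches the stronger conclusion that the addition of a countable non-standard model cannot be recursive on its own. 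Your argument invokes recursiveness of both $\oplus$ and $\otimes$, which suffices for the theorem as stated (Definition~10 demands that both be recursive) but does not isolate addition. If you replace your test equation $c = (q \otimes \pi_n^{\mathcal{M}}) \oplus r_i$ by its $\oplus$-only unwinding, your proof yields the sharper statement with no other changes.
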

\begin{proof}
Let $\mathcal{M}$ be a non-standard model for $\mathsf{PA}$ and let $C \in \mathsf{SSy}(\mathcal{M})$ be non-recursive. Then, there exists a $c \in \mathcal{M}$ such that $C = \{n \in \mathbb{N}:\mathcal{M} \vDash \varphi(n,c)\}$, where $\varphi(n,c) = \exists y(c= y \otimes \pi_{n})$ uniquely codes $C$ by the element $c$. Assume to the contrary that the function $\oplus: \mathbb{N}^2 \rightarrow \mathbb{N}$ is recursive. Then, let
\begin{equation*}
\psi(n,c) =(c = \underbrace{y \oplus \cdot \cdot \cdot \oplus y}_\text{$\pi_{n}$ y's}) \vee (c = \underbrace{y \oplus \cdot \cdot \cdot \oplus y}_\text{$\pi_{n}$ y's} \oplus 1) \vee \cdot \cdot \cdot \vee (c = \underbrace{y \oplus \cdot \cdot \cdot \oplus y}_\text{$\pi_{n}$ y's} \underbrace{\oplus 1 \oplus \cdot \cdot \cdot \oplus 1}_\text{$\pi_{n}-1$ 1's})
\end{equation*}
By Euclidean Division\footnote{Since $\mathcal{M} \vDash \mathsf{PA}$ and $\mathsf{PA}$ proves Euclidean division, then for $c \in \mathcal{M}$, and $n,y,r \in \mathbb{N}$ with $c,y \neq 0$, $\exists ! y \exists !r$ such that $c = (y \otimes n) \oplus r$, where $0 \leq r < n$.}, $\exists!\pi_{n}^{\mathcal{M}} \exists!r \in \mathbb{N}$ such that $c = (y \otimes \pi_{n}^{\mathcal{M}}) \oplus r$ and $0 \leq r < \pi_{n}^{\mathcal{M}}$. If $r = 0$, then $n \in C$ since the disjunct $c = \underbrace{y \oplus \cdot \cdot \cdot \oplus y}_\text{$\pi_{n}$ y's}$ of $\psi(n,c)$ is true and so $\varphi(n,c)$ is true. If $r \neq 0$, then $n \notin C$ since one of the other disjuncts in $\psi(n,c)$ is true and so $\varphi(n,c)$ is not true. Hence, $C$ is recursive since it is correctly decidable in $\mathcal{M}$ if $n \in C$ or $n \notin C$; yet this contradicts the fact that $C$ is non-recursive. Therefore, $\mathcal{M}$ is not recursive since $\oplus: \mathbb{N}^2 \rightarrow \mathbb{N}$ is not recursive.
\end{proof}
With Tennenbaum's Theorem, an answer can be given to Problem 1 which stated ``How many countable non-standard models $\mathcal{M}\vDash \mathsf{PA}$ are there and how do they differ from $\mathcal{N}$?''. By Theorem 5, there are $2^{\aleph_{0}}$ countable non-standard models of $\mathsf{PA}$, and by Tennenbaum's Theorem every countable non-standard model of $\mathsf{PA}$ is not recursive. Thus, an explicit line is drawn between the standard model of $\mathsf{PA}$ and all of the countable non-standard models of $\mathsf{PA}$, as any attempts to formulate arithmetical operations in such a non-standard model will be severely restricted.

The preceding paper has arrived at three primary and interdependent conclusions which may be summarized as follows. Firstly, through invoking G\"{o}del's Completeness Theorem and the Compactness Theorem, an existence proof of non-standard models of Peano arithmetic was given. Secondly, by appealing to the G\"{o}del-Rosser Theorem and the notion of a model extension, it was proved that there are $2^{\aleph_{0}}$ countable non-standard models of Peano arithmetic. Thirdly, Tennenbaum's Theorem was proved by overspill, set coding, and the existence of a non-recursive set in every non-standard model of Peano arithmetic. Taken cumulatively, the methods and results required to prove the presented theorems were important for $20^{th}$ century mathematicians and logicians concerned with model theory and mathematical logic. The notable research programs on such topics conducted by Skolem, Rosser, Tarski, Mostowski and others, demonstrate the undeniable import of Tennebaum's Theorem.\cite{Kennedy}

\pagebreak

\begin{bibdiv}
\begin{biblist}

\bib{Hodges}{book}{
author = {Wilfrid Hodges},
title = {A Shorter Model Theory},
publisher = {Cambridge University Press},
year = {1997},
address = {Cambridge, UK}
}
\bib{Smith}{book}{
author = {Peter Smith},
title = {An Introduction to G\"{o}del's Theorems},
publisher = {Cambridge University Press},
year = {2007},
address = {Cambridge, UK}
}
\bib{Marker}{book}{
author = {David Marker},
title = {Model Theory: An Introduction},
publisher = {Springer-Verlag},
year = {2002},
address = {New York, USA}
}
\bib{Kaye}{book}{
author = {Richard Kaye},
title = {Models of Peano Arithmetic},
publisher = {Oxford University Press},
year = {1991},
address = {New York, USA}
}
\bib{Kossak}{book}{
author = {Roman Kossak},
title = {The Structure of Models of Peano Arithmetic},
publisher = {Oxford University Press},
year = {2006},
address = {New York, USA}
}
\bib{Kennedy}{article}{
author = {Juliette Kennedy, Roman Kossak},
title={Set Theory, Arithmetic, and Foundations of Mathematics: Theorems, Philosophies},
journal={Lecture Notes in Logic},
volume={36},
pages={66--72},
year={2011}
}
\bib{Gray}{book}{
author = {J.J. Gray},
title = {J\'{a}nos Bolyai, Non-Euclidean Geometry and the Nature of Space},
publisher = {MIT Press},
year = {2004},
address = {Cambridge, USA}
}
\bib{Berggren}{book}{
author = {J. L. Berggren},
title = {Episodes in the Mathematics of Medieval Islam},
publisher = {Springer},
year = {1986},
address = {New York, USA}
}
\bib{Laugwitz}{book}{
author = {D. Laugwitz},
title = {Bernhard Riemann, 1826-1866: Turning Points in the Conception of Mathematics},
publisher = {Basel: Birkh\"{a}user},
year = {1999},
address = {Boston, USA}
}
\bib{Ewald}{book}{
author = {W. Edwald},
title = {From Kant to Hilbert: A Source Book in the Foundations of Mathematics},
publisher = {Oxford University Press},
year = {1996},
address = {Oxford, UK}
}
\bib{Mancosu}{book}{
author = {P. Mancosu},
title = {From Hilbert to Brouwer: The Debate on the Foundations of Mathematics in the 1920s},
publisher = {Oxford University Press},
year = {1998},
address = {Oxford, UK}
}
\bib{Ferreiros}{book}{
author = {J. Ferreir\'{o}s},
title = {Labyrinth of Thought: A History of Set Theory and Its Role in Modern Mathematics},
publisher = {Basel: Birkh\"{a}user},
year = {1999},
address = {Boston, USA}
}

\end{biblist}
\end{bibdiv}

\end{document}